\documentclass[12pt]{amsart}

\usepackage{amsmath, amstext, amsthm, amssymb, amsfonts, latexsym}

\pagestyle{plain}

\numberwithin{equation}{section}

\newcommand{\T}{\mathbb{T}}
\newcommand{\R}{\mathbb{R}}
\newcommand{\Z}{\mathbb{Z}}
\newcommand{\SSS}{\mathbb{S}}

\newcommand{\RR}{\mathbb{R}^2}

\newcommand{\TTT}{\mathbb{T}^3}
\newcommand{\RRR}{\mathbb{R}^3}
\newcommand{\ZZZ}{\mathbb{Z}^3}

\newcommand{\tx}{\tilde{x}}
\newcommand{\ty}{\tilde{y}}
\newcommand{\tz}{\tilde{z}}
\newcommand{\tp}{\tilde{p}}
\newcommand{\tq}{\tilde{q}}
\newcommand{\tgamma}{\tilde{\gamma}}

\newcommand{\tGamma}{\tilde{\Gamma}}
\newcommand{\tSigma}{\tilde{\Sigma}}
\newcommand{\tD}{\tilde{D}}
\newcommand{\tW}{\tilde{W}}
\newcommand{\tf}{\tilde{f}}

\newcommand{\cF}{\mathcal{F}}
\newcommand{\tcF}{\tilde{\mathcal{F}}}

\newcommand{\GL}{\operatorname{GL}}

\newtheorem{theorem}{Theorem}[section]

\newtheorem{lemma}[theorem]{Lemma}

\title{Transitivity of conservative diffeomorphisms isotopic to Anosov on $\TTT$}
\date{\today}
\author[Martin Andersson]{Martin Andersson}
\email{martin@mat.uff.br}

\author[Shaobo Gan]{Shaobo Gan}
\thanks{Shaobo Gan  is supported by 973 project (Grant No. 2011CB808002) and NSFC (Grant No. 11025101 and 11231001) }
\email{gansb@pku.edu.cn}

\begin{document}

\begin{abstract}
We prove transitivity for  volume preserving $C^{1+}$ diffeomorphisms on $\TTT$ which are isotopic to a linear Anosov automorphism along a path of weakly partially hyperbolic diffeomorphisms.
\end{abstract}

\maketitle

\section{introduction}

Ergodicity of volume preserving diffeomorphisms with some hyperbolicity has been one of the main topics of research in differentiable dynamics during the last two decades. Nearly all results that have been found are in the setting of strong partial hyperbolicity, i.e., partial hyperbolicity informally referred to as type $E^s \oplus E^c \oplus E^u$. A notable exception to this rule is Tahzibi's example \cite{MR2085722} of stably ergodic diffeomorphisms on $\T^4$ admitting only a dominated splitting. Another exception is \cite[Corollary 1.8]{MR2574879}, where it was proved that ergodicity is an open phenomenon among volume preserving partially hyperbolic diffeomorphisms of type $E^{cs} \oplus E^u$ if $E^{cs}$ is mostly contracting (see \cite{BV} for definitions). Unfortunately, the  mostly contracting condition seems too restrictive and is too hard to work with in any case, so one cannot expect it to provide a general route toward stable ergodicity.

If one wishes to extend the search of stable ergodicity outside the classical realm of strong partial hyperbolicity, one natural down to earth question to ask is whether every sufficiently smooth volume preserving diffeomorphism with dominated splitting on $\TTT$, isotopic to a linear Anosov diffeomorphism, is ergodic. There is no framework in today's mathematics in which this question can be approached in its full generality. Here we prove that such diffeomorphisms are transitive, under a mild condition on the isotopy between the diffeomorphism and its linear representative.

For volume preserving diffeomorphisms in three dimensions, dominated splitting is equivalent to  weak partial hyperbolicity, i.e., hyperbolicity informally referred to as type $E^s \oplus E^{cu}$ or $E^{cs} \oplus E^u$, with $E^s$ and $E^u$ being one dimensional in each case. Our condition on the isotopy is that if the linear automorphism $A$ to which our diffeomorphism $f$ is isotopic has one dimensional stable direction, then $f$ is of type $E^s \oplus E^{cu}$ and, moreover, the isotopy can be chosen so that each diffeomorphism along its path is also of type $E^s \oplus E^{cu}$. Similarly, if $A$ has a one dimensional unstable direction, then the isotopy can be chosen so that each diffeomorphism along its path (including $f$ itself) is of type $E^{cs} \oplus E^u$. We do not require that the isotopy is contained in the set of volume preserving diffeomorphisms.

In the case where the diffeomorphism $f$ is of class $C^2$ and strongly partially hyperbolic (type $E^s \oplus E^c \oplus E^u$), transitivity is a corollary of a much stronger theorem by Hammarlindl and Ures \cite{doi:10.1142/S0219199713500387} which states that either $f$ is ergodic or it is topologically conjugate to $A$. The key feature of our approach is, therefore, that it works in the considerably more general setting of weak partial hyperbolicity.

Our proof uses absolute continuity of the strong foliation and therefore requires the diffeomorphism to be of class $C^r$ for some (possibly non-integer) $r>1$. This property is only used in Lemma \ref{nontrivial loop}, and we believe that it is possible to replace the proof of that lemma with a more subtle argument that does not use absolute continuity. If so, the result would hold for $C^1$ diffeomorphisms. However, if one sees our result as an invitation to study stable ergodicity of weakly partially hyperbolic systems, then such an improvement would make little difference.

This work is based on the ideas developed in \cite{1501.01670}. The authors would like to thank Radu Saghin for useful discussions and and Rafael Potrie for advice on weak partial hyperbolicity on $\TTT$.

\section{The result}

Recall that a diffeomorphism $f: \TTT \to \TTT$ has a dominated splitting $F\prec G$ if $F$ and $G$ are $Df$-invariant continuous sections of $T\TTT$ of complementary dimension such that  $T\TTT$ is a direct sum $F \oplus G$, and there exists some $n_0 \geq 1$ such that
\begin{equation}
\|Df^{n_0}_{ \vert F_x} \| \| (Df^{n_0}_{ \vert G_x} )^{-1} \| < \frac{1}{2}
\end{equation}
for every $x \in \TTT$.

We say that $f: \TTT \to \TTT$ is weakly partially hyperbolic with one dimensional strong stable bundle if it has dominated splitting $F\prec G$ such that $\dim F = 1$ and
\begin{equation}
\|Df^{n_0}_{ \vert F_x} \| < \frac{1}{2}
\end{equation}
for some $n_0\geq 1$ and every $x \in \TTT$.

\begin{theorem}\label{main}
Let $A :\TTT \to \TTT$ be a linear Anosov automorphism with one dimensional stable direction. Suppose that $f: \TTT \to \TTT$ is a volume preserving $C^r$ diffeomorphism, with $r>1$, isotopic to $A$ along a path on which each diffeomorphism is weakly partially hyperbolic with one dimensional strong stable bundle. Then $f$ is transitive.
\end{theorem}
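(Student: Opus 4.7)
The plan is to analyze the one-dimensional strong stable foliation $\cF^s$ of $f$, tangent to the uniformly contracting bundle $E^s$, and to deduce transitivity from density of its leaves. Since $f$ is $C^r$ with $r>1$, invariant section theory gives $\cF^s$ as an absolutely continuous foliation, and absolute continuity combined with volume preservation converts density of a single leaf into topological transitivity via a standard Hopf chain. The core task is therefore to produce a dense leaf of $\cF^s$.

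The first step is to pin down the asymptotic geometry of $\cF^s$ in the universal cover. I would lift the path $\{f_t\}$ to $\{\tf_t\}:\RRR\to\RRR$ and use continuous dependence of the dominated splitting along the isotopy, together with the fact that at $A$ the strong stable bundle is the constant linear subspace $E^s_A$, to obtain global control on the direction of $E^s_f$ (after perturbing the isotopy if necessary so that cones around $E^s_A$ remain narrow throughout). Standard cone-field integration then shows that each lifted leaf of $\tcF^s$ is quasi-isometric to, and contained in a bounded tube around, an affine line parallel to $E^s_A$. This is the analogue, in our weakly partially hyperbolic setting, of the global product structure enjoyed by the linear foliation of $A$. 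Because $E^s_A$ is totally irrational relative to $\ZZZ$, no leaf of $\cF^s$ can close up, and projecting a long piece of a lifted leaf into $\TTT$ approximates a piece of the linear irrational line up to small transverse error.

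Second, combining absolute continuity of $\cF^s$, Poincar\'e recurrence for the conservative $f$, and the quasi-isometric control from step one, I would construct a \emph{nontrivial loop}: a short arc transverse to $\cF^s$ whose endpoints lie on a common stable leaf and whose concatenation with the intermediate stable arc represents a nonzero class in $H_1(\TTT;\Z)$ transverse to $\R\cdot E^s_A$. This is the role played by Lemma~\ref{nontrivial loop}, the unique step that uses $r>1$. Propagating the transverse jump of such a loop by $\cF^s$-holonomy produces a fixed nonzero translation of stable plaques modulo $\ZZZ$; combined with the totally irrational asymptotic direction of lifted leaves, this forces the closure of a typical stable leaf to contain an open set in some center-unstable plaque, and hence to equal all of $\TTT$. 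A standard Hopf-chain argument, using absolute continuity and volume preservation, then upgrades density of stable leaves to transitivity of $f$.

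The main obstacle will be the nontrivial loop construction in step two: one must produce a transverse return that is short enough for absolute continuity to force it to lie generically on a common stable leaf, while simultaneously projecting to a homotopically nontrivial loop on $\TTT$. The two-dimensional analogue in \cite{1501.01670} can be handled using translation vectors on the universal cover, but in dimension three the appropriate coupling of Poincar\'e recurrence with the global topological constraint coming from the isotopy to $A$ is considerably more delicate.
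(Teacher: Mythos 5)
Your proposal has the right ingredients in outline (the strong stable foliation, absolute continuity to produce a homotopically nontrivial loop, and the global topology of $\TTT$ imposed by the isotopy to $A$), but the engine of your argument --- density of strong stable leaves, upgraded to transitivity by a ``Hopf chain'' --- is not what the paper does, and it contains genuine gaps. First, minimality of $\cF^s$ is a far stronger statement than transitivity and is not established by your sketch: knowing that lifted leaves shadow lines in the totally irrational direction $E^s_A$ up to bounded error does not make the projected leaves dense, and the step ``propagating the transverse jump of the loop by holonomy \dots\ forces the closure of a typical stable leaf to contain an open set'' is an assertion, not an argument. Second, a Hopf chain is the wrong tool: in the weakly partially hyperbolic setting $E^s\oplus E^{cu}$ there is no unstable foliation to close the chain, and transitivity is not obtained that way here. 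Third, your claim that cone fields around $E^s_A$ confine each lifted leaf to a \emph{bounded tube} around an affine line is incorrect as derived: a curve whose tangent stays in a narrow cone stays in a cone neighborhood whose width grows linearly, not in a tube. The bounded-distance property actually comes from the Franks--Walters semiconjugacy $H$ (at bounded $C^0$ distance from the identity, mapping lifted stable leaves onto stable lines of $A$) combined with Potrie's global product structure and the resulting quasi-isometry of $\tcF^s$; neither of these appears in your plan.

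The paper's route is by contradiction and is essentially topological. If $f$ is not transitive, one produces two disjoint $f$-invariant \emph{regular} open sets $U,V$, shows each is saturated by strong stable leaves (using only recurrence and uniform contraction of $E^s$), and then uses absolute continuity --- iterating a center-unstable disk inside $U$ until its growing area forces two of its points onto one stable plaque --- to produce homotopically nontrivial loops $\gamma\subset U$ and $\sigma\subset V$. After applying $f_\star=A$ once if necessary, $[\gamma]$, $[\sigma]$ and the stable unit eigenvector $v$ of $A$ form a basis of $\RRR$, and a degree-theoretic intersection lemma (a map of $\SSS^2$ not homotopic to a constant) forces some strong stable leaf through $\tSigma$ to meet $\tGamma$, contradicting disjointness of the two saturated sets. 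To salvage your approach you would have to either prove minimality of $\cF^s$, which is not known in this generality, or replace your density claim with an intersection statement of this kind.
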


It is implicit in the statement that $f$ itself is weakly partially hyperbolic with one dimensional strong stable direction.

Obviously, Theorem \ref{main} can be formulated analogously for the case where both the linear Anosov automorphism $A$ and the weakly partially hyperbolic diffeomorphism $f$ have a one dimensional strong unstable bundle. In this case, the proof follows by replacing $f$ with $f^{-1}$.

\section{Some preliminaries}\label{preliminaries}

We state some preliminary results used in the proof of Theorem \ref{main}.

\begin{theorem}[\cite{HPS, BP}]
Let $f:\TTT \to \TTT$ be a weakly partially hyperbolic $C^r$ diffeomorphism with one dimensional strong stable bundle $E^s$. Then there exists a $C^0$ $f$-invariant foliation $\cF^s$ of $\TTT$ with $C^r$ leaves tangent to $E^s$. Moreover, if $r>1$, then $\cF^s$ is absolutely continuous, and the holonomy between two disks uniformly transverse to $\cF^s$ has bounded Jacobian.
\end{theorem}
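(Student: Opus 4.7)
\emph{Existence and smoothness.} The plan is to obtain $\cF^s$ via the classical Hirsch--Pugh--Shub graph transform, and to establish absolute continuity via a Pugh--Shub style distortion argument. Fix a narrow cone field around $E^s$ that is strictly contracted under $Df^{-1}$, and work in local charts adapted to the splitting $E^s \prec G$. On the space of $C^r$ sections of $E^s$ over small $G$-disks whose derivatives lie in the cone, the graph transform induced by $f^{-1}$ is a $C^0$ contraction with rate controlled by the domination inequality $\|Df^{n_0}|_{E^s}\|\,\|(Df^{n_0}|_G)^{-1}\| < 1/2$. In higher regularity, the required spectral bunching $\|Df^N|_{E^s}\|^r \,\|(Df^N|_G)^{-1}\| < 1$ holds for all sufficiently large $N$, thanks to the uniform contraction $\|Df^{n_0}|_{E^s}\| < 1/2$. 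Hence the fixed point is a local $C^r$ strong stable manifold through each point; one-dimensionality of $E^s$ yields unique integrability, so the local manifolds glue to an $f$-invariant continuous foliation $\cF^s$ with $C^r$ leaves.

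\emph{Absolute continuity.} This step genuinely uses $r > 1$. Let $D_1, D_2$ be small $C^1$ disks uniformly transverse to $\cF^s$, chosen tangent to a forward $Df$-invariant cone field around $G$, so that $f^n D_i$ remain uniformly transverse to $\cF^s$ with bounded geometry. Let $h: D_1 \to D_2$ be the local holonomy and let $h_n$ be the holonomy between $f^n D_1$ and $f^n D_2$. From the intertwining $h_n \circ f^n = f^n \circ h$, the Jacobian $\operatorname{Jac}(h)(x)$ equals $\operatorname{Jac}(h_n)(f^n x)$ times a telescoping product of ratios of Jacobians of $f$ along the transverse directions at $f^k x$ and $f^k h(x)$ for $0 \leq k < n$. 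Since $d(f^k x, f^k h(x))$ decays exponentially in $k$ and $Df$ is H\"older (because $r > 1$), the associated logarithmic difference series converges absolutely and uniformly in $x$. As $\operatorname{Jac}(h_n)(f^n x) \to 1$ along nearby iterated transversals, this yields a pointwise bounded Jacobian for $h$, hence absolute continuity.

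\emph{Main obstacle.} The delicate point is the absolute continuity step. In the Anosov setting, one would use uniform expansion of $G$ to control the geometry of iterated transversals, but here $G$ is only dominated. Consequently one has to work with transversals tangent to a forward-invariant cone around $G$, and must verify that both the bounded geometry of the $f^n D_i$ and the convergence of the distortion series survive in the merely dominated regime. This is precisely what the Brin--Pesin and Pugh--Shub arguments are designed to handle, and it is where the H\"older regularity of $Df$ enters essentially.
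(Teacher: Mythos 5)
The paper does not prove this statement: it is quoted as a black box from the literature (Hirsch--Pugh--Shub for existence and $C^r$ regularity of the leaves, Brin--Pesin and Pugh--Shub for absolute continuity), with a pointer to the Barreira--Pesin book for a complete exposition. Your sketch is precisely the standard argument carried out in those references --- graph transform plus the bunching condition $\|Df^N|_{E^s}\|^r\,\|(Df^N|_G)^{-1}\|<1$ (which, as you note, follows for large $N$ from uniform contraction of $E^s$ together with domination) for the $C^r$ leaves, and the telescoping distortion estimate along exponentially converging orbits, using H\"older continuity of $Df$, for the bounded Jacobian --- so there is nothing in the paper to compare it against except the cited sources, with which it agrees. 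Two small slips are worth fixing: the candidate stable disks are graphs of sections of $G$ over small $E^s$-disks (not sections of $E^s$ over $G$-disks), since the graph transform of $f^{-1}$ acts on graphs over the bundle that $f^{-1}$ expands; and uniqueness of the local strong stable leaves comes from the uniform contraction of $E^s$ together with domination (the dynamical characterization of the strong stable set), not from one-dimensionality of $E^s$ --- a merely continuous one-dimensional distribution need not be uniquely integrable. Neither slip affects the substance.
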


See \cite{barreira2013introduction} for a complete and accessible exposition on stable manifolds and absolute continuity.

The foliation $\cF^s$ is called the strong stable foliation of $f$ and its leaves are called strong stable manifolds. The strong stable manifold that contains $x$ will be denoted by $W^s(x)$. For $\delta>0$, we denote by $W_\delta^s(x)$ the set of points in $W^s(x)$ whose distance from $x$ inside the leaf $W^s(x)$ is smaller than $\delta$.

We denote by $\tcF^s$ the lift of $\cF^s$ to the universal cover $\RRR$. If $\tf$ is a lift of $f$, then $\tcF^s$ is $\tf$-invariant. Its leaves are called strong stable manifolds for $\tf$. The strong stable leaf that contains $\tx \in \RRR$ is denoted by $\tW^s(\tx)$.

A matrix $A \in \GL(3,\Z)$ with determinant $\pm 1$ commutes with the canonical covering map $\pi: \RRR \to \TTT$ and therefore induces an automorphism on $\TTT$. If we identify $\pi_1(\TTT)$ with $\ZZZ$ in the obvious way, the action $f_\star$ of $f$ in $\pi_1(\TTT)$ can be represented by an element of $\GL(3,\Z)$. In our context, the matrix of $f_\star$ is equal to $A$ if and only if $f$ is isotopic to the automorphism induced by $A$. Since there is no source of confusion, we use the symbol $A$ to denote i) a matrix, ii) a linear map on $\RRR$, iii) an automorphism on $\pi_1(\TTT)$, and iv) an automorphism of $\TTT$.

\begin{theorem}[Walters \cite{Walters197071} and Franks \cite{MR0271990}]
Let $f$ be a diffeomorphism on $\TTT$ isotopic to a linear Anosov automorphism $A$, and let $\tf:\RRR \to \RRR$ be a lift of $f$ to the universal cover $\RRR$. Then there is a continuous surjection $H:\RRR \to \RRR$, commuting with translations of elements in $\ZZZ$,  satisfying $H\circ\tf =A \circ H $. More specifically, for any $\tx \in \RRR$, $H(\tx)$ is the unique point $\ty$ such that $\|\tf^n(\tx) - A^n(\ty)\|$  is bounded as $n$ ranges over $\Z$.
\end{theorem}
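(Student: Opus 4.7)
The plan is to construct $H$ as a bounded continuous perturbation of the identity by solving a cohomological equation via the hyperbolic splitting of $A$. First, I fix any lift $\tf$ of $f$; since $f$ is isotopic to $A$, the induced action on $\pi_1(\TTT)\cong \ZZZ$ equals $A$, so $\tf(\tx+v)=\tf(\tx)+Av$ for every $v\in\ZZZ$. The displacement
\begin{equation*}
\phi(\tx):=\tf(\tx)-A\tx
\end{equation*}
is then continuous and $\ZZZ$-periodic (direct check), hence uniformly bounded. Seeking $H$ in the form $H(\tx)=\tx+h(\tx)$ with $h:\RRR\to\RRR$ continuous, bounded, and $\ZZZ$-periodic automatically gives $H(\tx+v)=H(\tx)+v$; the conjugacy equation $H\circ\tf=A\circ H$ reduces to the functional equation
\begin{equation*}
h(\tf(\tx)) = A h(\tx) - \phi(\tx).
\end{equation*}

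Next I would solve this equation by splitting it according to the $A$-invariant decomposition $\RRR=E^s_A\oplus E^u_A$. After replacing $A$ by a high enough iterate (or choosing an adapted norm), I may assume $\|A|_{E^s_A}\|<1$ and $\|(A|_{E^u_A})^{-1}\|<1$. Writing $h=h^s+h^u$ and $\phi=\phi^s+\phi^u$ componentwise, the equation decouples into two pieces, each recast as a fixed-point problem on the Banach space of bounded continuous maps $\RRR\to E^\sigma_A$. For the stable side I consider
\begin{equation*}
T^s g := A^s(g\circ \tf^{-1}) - \phi^s\circ \tf^{-1},
\end{equation*}
which is an affine contraction with constant $\|A^s\|<1$. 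For the unstable side I consider
\begin{equation*}
T^u g := (A^u)^{-1}(g\circ \tf) + (A^u)^{-1}\phi^u,
\end{equation*}
a contraction with constant $\|(A^u)^{-1}\|<1$. Banach's fixed-point theorem produces unique bounded continuous $h^s,h^u$; $\ZZZ$-periodicity follows from uniqueness since the iterations preserve the closed subspace of periodic functions (using periodicity of $\phi^\sigma$ together with $\tf^{\pm 1}(\tx+v)=\tf^{\pm 1}(\tx)+A^{\pm 1}v$). Setting $h:=h^s+h^u$ and $H:=\mathrm{id}+h$ yields the semiconjugacy.

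Finally I would verify the remaining claims. Surjectivity: $H$ descends to a continuous self-map of $\TTT$ that is homotopic to the identity via $(\tx,t)\mapsto \tx+th(\tx)$, hence has degree one and is surjective. For the characterization of $H(\tx)$ as the unique point $\ty$ making $\|\tf^n(\tx)-A^n\ty\|$ bounded, one computes
\begin{equation*}
\tf^n(\tx) - A^n H(\tx) = \tf^n(\tx) - H(\tf^n(\tx)) = -h(\tf^n(\tx)),
\end{equation*}
which is bounded in $n\in\Z$. Conversely, if $\ty\neq H(\tx)$, setting $w:=\ty - H(\tx)\neq 0$ gives $\tf^n(\tx)-A^n\ty = -h(\tf^n(\tx)) - A^n w$; hyperbolicity of $A$ forces $\|A^n w\|$ to grow exponentially as $n\to+\infty$ when $w$ has nonzero component in $E^u_A$, and as $n\to-\infty$ otherwise, swamping the bounded term. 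I do not foresee a serious obstacle: this is a classical Livsic-type construction, and the only mild technical point is the passage to an adapted norm (or to an iterate of $A$, with the resulting $A^{n_0}$-semiconjugacy upgraded to an $A$-semiconjugacy by the uniqueness established above) in order to make the two linear pieces genuinely contracting in operator norm.
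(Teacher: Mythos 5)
The paper does not prove this statement at all --- it imports it as a known theorem of Walters and Franks --- so there is nothing internal to compare against. Your argument is correct and complete, and it is essentially the standard proof from those references: reduce $H=\mathrm{id}+h$ to the cohomological equation $h\circ\tf=Ah-\phi$ with $\phi=\tf-A$ bounded and $\ZZZ$-periodic, solve it by two contraction mappings on the Banach space of bounded continuous $\ZZZ$-periodic maps split along $E^s_A\oplus E^u_A$, get surjectivity from degree one, and get the shadowing characterization plus uniqueness from hyperbolicity of $A$ acting on $w=\ty-H(\tx)$. The only point worth flagging is cosmetic: the adapted-norm route is cleaner than passing to an iterate $A^{n_0}$, since the latter requires the extra uniqueness step $A^{-1}H_0\tf=H_0$ that you correctly sketch at the end.
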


Translations on $\RRR$ by elements of $\ZZZ$ are the deck transformations of the covering map $\pi: \RRR \to \ZZZ$.
Since $H$ is continuous and commutes with deck transformations, it projects to a continuous map $h:\TTT \to \TTT$ homotopy to the identity.
In particular, $H$ is of bounded $C^0$ distance from the identity on $\RRR$.

\begin{theorem}[Potrie \cite{1206.2860}]\label{potries theorem}
Let $f: \TTT \to \TTT$ be a $C^1$ diffeomorphism, isotopic to a linear Anosov automorphism along a path of weakly partially hyperbolic  diffeomorphisms with one dimensional strong stable bundle. Then there exists a foliation $\tcF^{cu}$ tangent to the center-unstable bundle of $f$. Moreover, if $\tcF^{cu}$ is the lift of $\cF^{cu}$ then every leaf of $\tcF^s$ intersects every leaf of $\tcF^{cu}$ in exactly one point.
\end{theorem}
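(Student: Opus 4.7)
The plan is to proceed in three stages: first produce a branching foliation tangent to $E^{cu}$; second, use the isotopy hypothesis to prove a quasi-isometric estimate for leaves of $\tcF^s$ in $\RRR$ and analyze the semiconjugacy $H$ on both bundles; and third, rule out branching and derive the one-point intersection property from the linear model for $A$.

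For the integrability step, since $\dim\TTT=3$ and $\dim E^{cu}=2$, I would invoke the Burago--Ivanov theorem for partially hyperbolic $C^1$ diffeomorphisms on $3$-manifolds to produce an $f$-invariant branching foliation $\cF^{cu}_{br}$ tangent to $E^{cu}$: a family of complete $C^1$ injectively immersed surfaces through every point, any two of which may coincide on a closed set but never cross transversely. This step uses only weak partial hyperbolicity of $f$ on $\TTT$ itself, not the full isotopy. The lift $\tilde{\cF}^{cu}_{br}$ is $\tf$-invariant in $\RRR$.

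Next, using the isotopy I would establish a uniform quasi-isometric estimate for $\tcF^s$: constants $a,b>0$ such that $d_{\tW^s(\tx)}(\tx,\ty)\le a\|\tx-\ty\|+b$ for every $\ty\in\tW^s(\tx)$. The property is trivial for the linear endpoint $A$, and because weak partial hyperbolicity of the prescribed type is an open condition and the strong stable bundle depends continuously along the path, one propagates the estimate continuously from $A$ to $f$. Once established, each leaf $\tW^s(\tx)$ is contained in a uniform tubular neighborhood of the affine line $\tx+E^s_A$, where $E^s_A\subset\RRR$ is the stable direction of $A$. Combined with $H\circ\tf=A\circ H$ and the uniqueness-of-bounded-orbit characterization of $H(\tx)$, this forces $H(\tW^s(\tx))\subseteq\tx+E^s_A$. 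The analogous argument applied to $\tilde{\cF}^{cu}_{br}$, using expansion of $Df$ along $E^{cu}$, shows that $H$ collapses each branching leaf into a translate $\tz+E^{cu}_A$ of the center-unstable plane of $A$.

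Finally, I would rule out branching: if two distinct branching leaves $L,L'$ met on a nontrivial closed set, their $H$-images would lie in a common affine $cu$-plane $\tz+E^{cu}_A$, and an iteration argument using the contraction of $Df$ along $E^s$, combined with the prohibition of transverse crossings for branching leaves, would force $L=L'$. This upgrades $\tilde{\cF}^{cu}_{br}$ to an honest foliation $\tcF^{cu}$. For the intersection statement, given $L_s\in\tcF^s$ and $L_{cu}\in\tcF^{cu}$, the affine line $H(L_s)\subseteq\tx+E^s_A$ meets the affine plane $H(L_{cu})\subseteq\tz+E^{cu}_A$ in a unique point; transversality of $E^s$ and $E^{cu}$, quasi-isometry of $L_s$, and the bounded $C^0$ distance of $H$ from the identity then rule out multiple preimages, yielding exactly one intersection. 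The main obstacle is the quasi-isometric estimate: this is the only place where the isotopy hypothesis enters essentially, and controlling the global shape of strong stable leaves in $\RRR$ uniformly along a path of a priori non-ergodic weakly partially hyperbolic systems is delicate; Potrie's original treatment requires a careful analysis of Reeb-like components of both branching foliations to forbid accumulation pathologies from arising along the path, and I would follow that route.
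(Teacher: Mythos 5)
First, a point of order: the paper does not prove this statement at all --- it is quoted as a theorem of Potrie \cite{1206.2860}, so there is no internal proof to compare yours against. Judged on its own terms, your outline does track the architecture of Potrie's actual argument: Burago--Ivanov branching foliations tangent to $E^{cu}$, the Franks--Walters semiconjugacy $H$, collapsing of leaves onto affine translates of the invariant subspaces of $A$, and finally de-branching together with a global product structure read off from the linear model. The high-level plan is the right one.

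The genuine gap is in the step you yourself flag as the crux: the quasi-isometry of $\tcF^s$. You propose to ``propagate the estimate continuously from $A$ to $f$'' on the grounds that weak partial hyperbolicity is open and the bundles vary continuously along the path. That mechanism does not work: an inequality $d_{\tW^s(\tx)}(\tx,\ty)\le a\|\tx-\ty\|+b$ with uniform constants is a global statement about the shape of noncompact leaves in $\RRR$, and the set of diffeomorphisms satisfying it (with some constants) is neither open nor closed in any evident way; $C^1$-continuity of $E^s$ controls leaves only on compact pieces, and the constants can degenerate along the path. Note also that in the present paper the logical order is the reverse of yours: quasi-isometry is \emph{deduced} from global product structure via \cite[Proposition 6.8]{1206.2860}, not used to establish it. What the isotopy hypothesis actually supplies in Potrie's argument is a locally constant datum --- the asymptotic linear direction (a plane, respectively a line) to which lifted leaves of the branching foliation stay within bounded Hausdorff distance --- which, being locally constant on the connected family of weakly partially hyperbolic diffeomorphisms containing the path, must coincide at $f$ with the corresponding eigendirections of $A$; global product structure and coherence then follow. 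Replacing your continuity-of-constants argument with that openness-and-closedness argument for the asymptotic direction would repair the sketch and bring it in line with the cited proof.
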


The second part of the statement of Potries theorem is referred to by saying that $\tcF^s$ and $\tcF^{cu}$ have global product structure. One of the consequences of the global product structure is that $\tcF^s$ is quasi-isometric: there exist constants $C, D$ such that if $\tx$ and $\ty$ belong to the same leaf of $\tcF^s$, then the distance between $\tx$ and $\ty$ inside that leaf is smaller than $C \|\tx - \ty \| + D$ \cite[Proposition 6.8]{1206.2860}. Since the distance between $\tx$ and $\ty$ inside a stable leaf grows indefinitely under backwards iteration, it follows that the restriction of $H$ to leaves of $\tcF^s$ homeomorphically onto stable manifolds of $A : \RRR \to \RRR$.

Our proof of Theorem \ref{main} makes use of the following intersection property. It is somewhat analogous to Lemma 4.2 in \cite{1501.01670}.

\begin{lemma}\label{intersection}
Let $X,Y,Z$ be a basis of $\RRR$. Let $\xi: \R \to \RRR$ and $\psi: \RR \to \RRR$ be continuous functions. Suppose there exists $K>0$ such that $\| rX - \xi(r) \|< K$ for every $r \in \R$, and that $\|sY + tZ - \psi(s,t)\| < K$ for every $(s,t) \in \RR$. Then there exist $r,s,t$ such that $\xi(r) = \psi(s,t)$.
\end{lemma}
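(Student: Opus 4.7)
The plan is to recast the conclusion as a fixed point equation on $\RRR$ and invoke Brouwer's fixed point theorem. Define the continuous map $F:\RRR\to\RRR$ by
\[
F(r,s,t) = \xi(r) - \psi(s,t),
\]
so that a solution to $\xi(r)=\psi(s,t)$ is precisely a zero of $F$. Let $G:\RRR\to\RRR$ be the comparison linear map $G(r,s,t) = rX - sY - tZ$; since $X,Y,Z$ form a basis, $G$ is an invertible linear isomorphism. The two hypotheses combine to give
\[
\|F(u)-G(u)\| \leq \|rX-\xi(r)\| + \|sY+tZ-\psi(s,t)\| < 2K
\]
for every $u=(r,s,t)\in\RRR$, so $F$ is a uniformly bounded continuous perturbation of $G$.

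Next, set
\[
\Phi(u) = u - G^{-1}\bigl(F(u)\bigr) = -G^{-1}\bigl(F(u)-G(u)\bigr).
\]
Because $G$ is invertible, $\Phi(u)=u$ if and only if $F(u)=0$. The second expression together with the estimate above yields $\|\Phi(u)\|\leq M := 2K\,\|G^{-1}\|$ for every $u\in\RRR$, so $\Phi$ in particular restricts to a continuous self-map of the closed ball $\overline{B}_M\subset\RRR$. Brouwer's fixed point theorem then produces $u_0\in\overline{B}_M$ with $\Phi(u_0)=u_0$, which by construction is a triple $(r,s,t)$ with $\xi(r)=\psi(s,t)$.

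There is no serious obstacle beyond recognizing that a uniformly bounded perturbation of a linear isomorphism on $\RRR$ must cover the origin; precomposing with $G^{-1}$ converts this into a fixed point problem on a ball, to which Brouwer applies directly. Equivalently, one could argue by degree theory: on the boundary of a ball of radius $R > 2K\,\|G^{-1}\|$ the straight-line homotopy $(1-\lambda)G + \lambda F$ stays away from the origin, and since $\deg(G, B_R, 0) = \pm 1$, homotopy invariance of degree forces $F$ to vanish somewhere in $B_R$.
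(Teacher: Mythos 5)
Your proof is correct, but it takes a different route from the paper's. The paper normalizes the difference to obtain a family of sphere maps $\Theta_R(u,v,w)=\bigl(\psi(Rv,Rw)-\xi(Ru)\bigr)/\|\psi(Rv,Rw)-\xi(Ru)\|$, notes that $\Theta_R$ converges uniformly to the reflection $(u,v,w)\mapsto(-u,v,w)$ as $R\to\infty$ while $\Theta_0$ would be constant, and concludes that the family cannot be everywhere defined because a reflection of $\SSS^2$ is not null-homotopic. You instead observe that $F=\xi-\psi$ is a uniformly bounded continuous perturbation of the linear isomorphism $G(r,s,t)=rX-sY-tZ$, and convert the existence of a zero of $F$ into a fixed-point problem for $\Phi=-G^{-1}\circ(F-G)$, which maps $\RRR$ into the ball of radius $2K\|G^{-1}\|$, so Brouwer applies. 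Both arguments rest on the same underlying topological fact (nonvanishing of a degree in dimension $3$), and your estimates check out: $\|F(u)-G(u)\|<2K$ follows from the triangle inequality, $\Phi(u)=u$ is equivalent to $F(u)=0$ by invertibility of $G$, and the self-map property of $\overline{B}_M$ is immediate from the global bound on $\|\Phi\|$. What your version buys is an off-the-shelf invocation of Brouwer in place of the paper's homotopy-class computation for sphere maps, plus an explicit a priori localization of the intersection point in the ball of radius $2K\|G^{-1}\|$; your degree-theoretic variant via the straight-line homotopy $(1-\lambda)G+\lambda F$ on $\partial B_R$ for $R>2K\|G^{-1}\|$ is likewise sound and is in fact the closest in spirit to what the paper does with $\Theta_R$.
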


Although it may seem intuitively obvious, giving rigorous proof of this kind of lemma can be puzzling unless one knows some trick. Here, the trick is to consider the homotopy class of a map from $\SSS^2$ to itself.

\begin{proof}[Proof of Lemma \ref{intersection}]
First notice that, up to a change of basis, we may assume for simplicity that $X,Y,Z$ is the standard basis in $\RRR$, so that $\| \xi(r) - (r,0,0)\| < K$ and $\|\psi(s,t)- (0,s,t)\| <K$. Consider the map
\begin{align}
\Theta_R: \SSS^2 & \to \SSS^2 \\
\Theta_R(u,v,w)  & =  \frac{\psi(Rv,Rw)- \xi(Ru)}{\|\psi(Rv,Rw)- \xi(Ru)\|}.
\end{align}
It is clearly well defined for evey $R$ sufficiently large. Moreover, $\Theta_R$ converges uniformly to the involution $(u,v,w) \mapsto (-u,v,w)$ when $R$ tends to infinity. If Lemma \ref{intersection} would be false, so that we would have $\psi(s,t) \neq \xi(r)$ for every triple $(r,s,t) \in \RRR$, then $\Theta_R$ would be  well defined for every $R \geq 0$. Moreover, $R\mapsto \Theta_R$ would be continuous and $\Theta_0$ would be constant. But the involution $(u,v,w) \mapsto (-u,v,w)$ is not homotopic to a constant, so $\psi(s,t) \neq \xi(r)$ cannot hold for every $(r,s,t) \in \RRR$.
\end{proof}

\section{Invariant regular open sets}

Recall that an open set $U \subset \TTT$ is called \emph{regular} if it is equal to the interior of its closure. If $A$ is any subset of $\TTT$, then we denote by $A^\perp$ the complement of its closure, i.e., $A^\perp = \TTT \setminus \overline{A}$. With this notation, an open set $U \in \TTT$ is regular if and only if $U = U^{\perp \perp}$. It is easily shown that if $U$ is open, then $U \subset U^{\perp \perp}$ and $U^{\perp} = U^{\perp \perp \perp}$. In particular, $U^\perp$ is regular (see \cite[chapter 10]{MR2466574}). Note also that $f(U^\perp) = (f(U))^\perp$ for every homeomorphism $f: \TTT \to \TTT$.

\begin{lemma}\label{complementary pair}
Let $f: \TTT \to \TTT$ be a volume preserving homeomorphism. If $f$ is not transitive, then there exist non-empty $f$-invariant regular open sets $U, V$ with $U \cap V = \emptyset$.
\end{lemma}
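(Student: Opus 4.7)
I would begin by unpacking non-transitivity into a concrete two-sided disjointness statement. On the compact metric space $\TTT$, the failure of transitivity for a homeomorphism is equivalent, via the classical Birkhoff transitivity criterion, to the existence of non-empty open sets $U_0, V_0 \subset \TTT$ with $f^n(U_0) \cap V_0 = \emptyset$ for every $n \in \Z$. Volume preservation enters precisely here: it allows one to reconcile one-sided and two-sided transitivity, since Poincar\'e recurrence applied to $f$ and to $f^{-1}$ ensures that forward and backward orbit saturations of a given open set behave symmetrically.

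Given such a pair $U_0, V_0$, I would form the orbit-saturation
\begin{equation*}
W := \bigcup_{n \in \Z} f^n(U_0).
\end{equation*}
By construction, $W$ is open, non-empty, $f$-invariant, and disjoint from $V_0$. Since $V_0$ is open, this disjointness upgrades to $V_0 \cap \overline{W} = \emptyset$: any point in $V_0 \cap \overline{W}$ would admit an open neighborhood contained in $V_0$ that meets $W$, which is absurd. Therefore $V_0 \subset \TTT \setminus \overline{W} = W^\perp$.

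I would then set $U := W^{\perp\perp}$ and $V := W^\perp$. Both are regular open sets by the properties recalled at the start of the section, and both are $f$-invariant because $f$ commutes with closure and interior while $W$ is $f$-invariant. Non-emptiness follows from $U_0 \subset W \subset W^{\perp\perp} = U$ and from $V_0 \subset V$; disjointness follows from $U = \operatorname{int}(\overline{W}) \subset \overline{W}$ together with $V = \TTT \setminus \overline{W}$.

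The only step that requires genuine thought is the first one, namely extracting the two-sidedly disjoint pair $U_0, V_0$ from non-transitivity with the help of volume preservation. Once that is in hand, the lemma reduces to purely formal manipulation with the $\perp$-operation and the observation that homeomorphisms commute with closure and interior.
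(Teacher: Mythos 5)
Your proof is correct and follows essentially the same route as the paper: extract a two-sidedly disjoint pair of open sets from non-transitivity, saturate one of them under the orbit of $f$, and apply the $\perp$-operation to produce the disjoint regular open invariant pair. The only (immaterial) difference is that you saturate $U_0$ and take $(W^{\perp\perp}, W^\perp)$ where the paper saturates $V_0$ and takes $(V_1^\perp, V_1^{\perp\perp})$; your extra remarks on reconciling one-sided and two-sided transitivity address a point the paper leaves implicit.
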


\begin{proof}
If $f$ is not transitive, then there exist nonempty open sets $U_0, V_0 \subset \TTT$ such that $U_0 \cap f^n(V_0) = \emptyset $ for every $n \in \Z$. Let $V_1 = \bigcup_{n \in \Z} f^n(V_0)$. Then $V_1$ is open and $f$-invariant. Let $U = V_1^\perp$ and $V = U^\perp = V_1^{\perp \perp}$. Then $U$ and $V$ are non-empty disjoint regular open $f$-invariant sets.
\end{proof}

\begin{lemma}\label{saturated}
Let $f:M \to M$ be a volume preserving weakly partially hyperbolic diffeomorphism with a strong stable bundle. If $U \subset M$ is a regular open $f$-invariant set, then $U$ is saturated by strong stable leaves.
\end{lemma}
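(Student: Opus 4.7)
The plan is to introduce $V := \bigcup_{x \in U} W^s(x)$, the $\cF^s$-saturation of $U$, and to show $V = U$. For each $\delta > 0$ set
\[
U_\delta := \{\,y \in M : W^s_\delta(y) \cap U \neq \emptyset\,\}.
\]
Continuity of $\cF^s$ makes each $U_\delta$ open; clearly $U \subset U_\delta \subset V$, and $V = \bigcup_{\delta > 0} U_\delta$ is a monotone union in $\delta$. So $V$ is open and $f$-invariant.

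The heart of the argument is to show that $\text{vol}(U_\delta)$ is independent of $\delta$. Replacing $f$ by the iterate $f^{n_0}$ furnished by the definition of weak partial hyperbolicity, one may assume that $f$ uniformly contracts $\cF^s$ by some factor $\lambda < 1$, so $f(W^s_\delta(y)) \subset W^s_{\lambda\delta}(f(y))$. Combined with $f(U) = U$, this yields $f(U_\delta) \subset U_{\lambda\delta}$. Volume preservation then gives $\text{vol}(U_\delta) \leq \text{vol}(U_{\lambda\delta})$, while the trivial inclusion $U_{\lambda\delta} \subset U_\delta$ gives the reverse. Iterating, $\text{vol}(U_\delta) = \text{vol}(U_{\lambda^n\delta})$ for every $n \geq 0$; letting $n \to \infty$ so that $\lambda^n\delta \to 0$, and separately letting $\delta \to \infty$, both monotone limits equal the common value, yielding
\[
\text{vol}(V) \;=\; \text{vol}(U_\delta) \;=\; \text{vol}\Bigl(\bigcap_{\delta' > 0} U_{\delta'}\Bigr).
\]

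To conclude, I observe that $\bigcap_{\delta' > 0} U_{\delta'} \subset \bar U$: a point $y$ in this intersection is, for every $\delta > 0$, within stable-distance (hence $M$-distance) at most $\delta$ of some point of $U$, so $y \in \bar U$. Therefore the open set $V \cap U^\perp$ is disjoint from $\bigcap_{\delta' > 0} U_{\delta'}$, and the volume equality above forces $\text{vol}(V \cap U^\perp) = 0$. Since a nonempty open subset of $M$ has positive volume, $V \cap U^\perp = \emptyset$, i.e., $V \subset \bar U$. Being open, $V$ is contained in $(\bar U)^\circ = U^{\perp\perp} = U$ by regularity, and this is what had to be shown.

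The step demanding the most care is the inclusion $f(U_\delta) \subset U_{\lambda\delta}$, which genuinely uses both the uniform contraction of $\cF^s$ (hence the passage to an iterate) and the $f$-invariance of $U$; once it is in place, the rest of the argument is bookkeeping with the invariant measure and the elementary topology of regular open sets.
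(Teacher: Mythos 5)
Your proof is correct, but it takes a genuinely different route from the one in the paper. The paper first reduces, via $U=U^{\perp\perp}$, to showing that the closure of an \emph{arbitrary} open $f$-invariant set $A$ is saturated, and then argues backwards in time: by conservativity every $x\in A$ is non-wandering, so one finds returning points $f^{-n_k}(x_k)\to x$ with $W^s_\epsilon(x_k)\subset A$, and since the backward iterates $f^{-n_k}(W^s_\epsilon(x_k))\subset A$ have lengths tending to infinity they accumulate on $W^s_L(x)$ for every $L$, giving $W^s(x)\subset\overline{A}$. You instead work forwards: introducing the approximate saturations $U_\delta$ and passing to the iterate $f^{n_0}$, you use the uniform contraction $f^{n_0}(W^s_\delta(y))\subset W^s_{\delta/2}(f^{n_0}(y))$ together with volume preservation to force $\operatorname{vol}(U_\delta)$ to be independent of $\delta$, which squeezes the full saturation $V=\bigcup_\delta U_\delta$ between $U$ and $\bigcap_{\delta'}U_{\delta'}\subset\overline{U}$; openness of $V\cap U^{\perp}$ and regularity of $U$ then finish the job. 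The trade-off: your argument replaces the paper's soft recurrence-plus-accumulation step (whose assertion that long arcs through $f^{-n_k}(x_k)$ accumulate on $W^s_L(x)$ needs a small continuity-of-the-foliation justification) by a clean monotone-measure computation, at the cost of explicitly invoking the uniform contraction rate and the continuity argument showing each $U_\delta$ is open; the paper's version never needs the contraction rate quantitatively and records along the way the slightly more general fact that $\overline{A}$ is saturated for every open invariant $A$, not only regular ones. Both proofs use volume preservation and the regularity of $U$ essentially, so neither hypothesis can be dropped in either approach.
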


\begin{proof}
Since $U = U^{\perp \perp}$, it suffices to prove that, given any open $f$-invariant set $A$, the set $A^\perp$ is saturated by strong stable leaves. But to prove that $A^\perp$ is saturated by strong stable leaves, it is enough to prove that $\overline{A}$ is saturated by strong stable leaves, since $A^\perp$ is the complement of $\overline{A}$ and any subset of $\TTT$ is saturated by strong stable leaves if and only if its complement is saturated by strong stable leaves.

Thus let $A \subset \TTT$ be any open $f$-invariant set, and let  $x \in A$ . Since $f$ is conservative, $x$ is non-wandering.
Therefore, there exists a sequence $x_k$ in $A$ converging to $x$, and $n_k \to \infty$ such that $f^{-n_k}(x_k)$ converges to $x$.
Because $x_k \to x$, there is some $\epsilon>0$ such that $W_\epsilon^s(x_k) \subset A$ for every $k$.
By invariance of $A$, $f^{-n_k}(W_\epsilon^s(x_k))$ is contained in $A$ for every $k$.
Note that the length of $f^{-n_k}(W_\epsilon^s(x_k))$ tends to infinity.
In particular, $f^{-n_k}(W_\epsilon^s(x_k))$  accumulates on $W_L^s(x)$ for any $L>0$.
Therefore, $W_L^s(x) \subset \overline{A}$.
Since $L>0$ is arbitrary, we conclude that $W^s(x) \subset \overline{A}$.
\end{proof}

\begin{lemma}\label{nontrivial loop}
Let $r>1$ and let $f:\TTT \to \TTT$ be a $C^r$ volume preserving weakly partially hyperbolic diffeomorphism with a one dimensional strong stable bundle. Suppose that $U\subset \TTT$ is an $f$-invariant regular open set and denote by $i:U \to \TTT$ the inclusion map. Then $i_\star \pi_1(U)$ is a non-trivial subgroup of $\pi_1(\TTT)$.
\end{lemma}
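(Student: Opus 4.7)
The plan is to argue by contradiction via a volume estimate in the universal cover. Suppose, for the sake of contradiction, that $i_\star\pi_1(U)$ is trivial. Then for every connected component $\tilde{U}$ of $\pi^{-1}(U)$ the covering projection $\pi|_{\tilde{U}}$ is injective, so $\mathrm{vol}(\tilde{U})\le\mathrm{vol}(U)\le 1$. By Lemma \ref{saturated}, $U$ is saturated by leaves of $\cF^s$; since each such leaf lifts to connected curves in $\pi^{-1}(U)$, $\tilde{U}$ is saturated by leaves of $\tcF^s$.

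Fix $\tx\in\tilde{U}$ and choose a small two-dimensional disk $\tilde{T}\subset\tilde{U}$ through $\tx$ transverse to $\tcF^s$; for example, a small neighborhood of $\tx$ inside the leaf $\tcF^{cu}(\tx)$ furnished by Theorem \ref{potries theorem}. The global product structure between $\tcF^{cu}$ and $\tcF^s$ implies that distinct points of $\tilde{T}$ lie on distinct leaves of $\tcF^s$. By saturation, the tube $\mathcal{T}=\bigcup_{\ty\in\tilde{T}}\tW^s(\ty)$ is contained in $\tilde{U}$, so it suffices to show that $\mathrm{vol}(\mathcal{T})=\infty$.

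By the quasi-isometry of $\tcF^s$ recalled after Theorem \ref{potries theorem}, each leaf $\tW^s(\ty)$ has infinite length in $\RRR$, staying within bounded Hausdorff distance of a line in the stable direction of $A$. Absolute continuity of $\cF^s$, together with the $\ZZZ$-periodicity of $\tcF^s$ (it is the lift of a foliation of the compact manifold $\TTT$), provides trivializing foliation charts of some uniform size in which volume decomposes as arclength along the leaves against Lebesgue area on a transverse disk, with Radon--Nikodym density bounded above and below by positive constants independent of the chart. Covering, for each $L>0$, the sub-arc of length $L$ of every leaf through $\tilde{T}$ by finitely many such charts and controlling overlaps yields a bound $\mathrm{vol}(\mathcal{T})\ge c\,L\,\mathrm{Area}(\tilde{T})$ with $c>0$ independent of $L$; letting $L\to\infty$ gives $\mathrm{vol}(\mathcal{T})=\infty$, contradicting $\mathrm{vol}(\tilde{U})\le 1$.

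The main obstacle is this final volume estimate: upgrading the local bounded-Jacobian form of absolute continuity to a uniform global lower bound for the volume of a tube around an unbounded leaf. The uniformity is afforded by the $\ZZZ$-periodicity of $\tcF^s$, but making the estimate rigorous requires a careful choice of uniformly-sized foliation charts together with bookkeeping of overlap multiplicities so that the chart-wise Jacobian bounds combine into the stated global estimate.
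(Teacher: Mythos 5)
Your reduction is fine as far as it goes: if every loop in $U$ were contractible in $\TTT$, each component $\tilde{U}$ of $\pi^{-1}(U)$ would indeed project injectively into $\TTT$, hence have volume at most $1$, while being saturated by complete leaves of $\tcF^s$, each a properly embedded quasi-isometric curve. The gap is the step you yourself flag as the main obstacle, and it is not mere bookkeeping. Absolute continuity with bounded Jacobian controls the holonomy between transversals at \emph{uniformly bounded} leafwise distance; when you chain $L$ uniformly-sized charts along a leaf, the per-chart Jacobian bounds compose multiplicatively, so the transverse area of your tube at leafwise distance $L$ from $\tilde{T}$ is only bounded below by something like $e^{-CL^{\alpha}}\operatorname{Area}(\tilde{T})$ (this is what the standard H\"older estimate $|\log\operatorname{Jac}|\le C\sum_{n\ge0}d(f^{n}x,f^{n}y)^{\alpha}\le C'\,d_s(x,y)^{\alpha}$ for stable holonomy gives). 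The $\ZZZ$-periodicity of $\tcF^s$ does not rescue this: it makes each individual chart uniform, but says nothing about the ratio of Jacobians accumulated along a leaf segment of length $L$. Consequently the claimed bound $\operatorname{vol}(\mathcal{T})\ge c\,L\,\operatorname{Area}(\tilde{T})$ with $c$ independent of $L$ is unproved, and the estimates you actually have are consistent with $\operatorname{vol}(\mathcal{T})<\infty$, so no contradiction is reached.

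The paper sidesteps exactly this difficulty by staying on the compact torus, where a transversal has \emph{finite} area and only a \emph{single} local holonomy needs a Jacobian bound. It takes a disk $D\subset U$ tangent to the center-unstable bundle, iterates it forward so that its area $m_n(D_n)$ tends to infinity, covers $\TTT$ by finitely many simply connected foliation boxes $C_i=\bigcup_{x\in\Sigma_i}W^s_\delta(x)$, and pigeonholes to find one box $C$ with $\limsup_n m_n(D_n\cap C)=\infty$. If the local holonomy $h^s$ onto $\Sigma$ were injective on $D_n\cap C$ for all $n$, the bounded Jacobian would force $m_\Sigma(\Sigma)=\infty$; hence some plaque $W^s_\delta(x)$ meets some $D_n$ twice. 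Concatenating a path in $D_n$ between the two intersection points with a stable path back (which stays in $U$ by Lemma \ref{saturated}) gives a loop in $U$, and this loop is non-contractible because in the universal cover a $cu$-leaf and an $s$-leaf meet in at most one point by global product structure. To salvage your tube argument you would need an input beyond local absolute continuity---essentially the volume-growth and recurrence mechanism the paper uses---to rule out integrable decay of the holonomy Jacobian along an infinite stable leaf.
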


\begin{proof}
We cover $\TTT$ with a finite number of  sets $C_1, \ldots, C_N$ of the form
\begin{equation}
C_i = \bigcup_{x \in \Sigma_i} W_\delta^s(x).
\end{equation}
Here $\Sigma_i$ are disks uniformly transverse to the the strong stable foliation. By making sure that the $\Sigma_i$ and $\delta$ are not too large, we may (and do) assume that each $C_i$ is simply connected.

Let $D \subset U$ be a disk tangent to the center-unstable bundle and denote by $D_n$ the $n^{th}$ iterate of $D$ under $f$.
By invariance of $U$, $D_n \subset U$ for every $n$.
Denote by $m_n$ the area measure on $D_n$.
Since $f$ is volume hyperbolic, we have $m_n(D_n) \to \infty$ as $n \to \infty$. Now,
\begin{equation}
m_n(D_n) \leq \sum_{i=1}^{N} m_n(D_n \cap C_i),
\end{equation}
so there is at least one $i \in \{1, \ldots, N \}$ such that
\begin{equation}
\limsup_{n \to \infty} m_n(D_n \cap C_i) = \infty.
\end{equation}
 We fix such an $i$ and write $C_i = C$ and $\Sigma_i = \Sigma$.

{\bf Claim:} There is some $n$ and some $x \in \Sigma$ such that $W_\delta^s(x)$ intersects $D_n$ in more than one point.

Consider the holonomy map $h^s: C \to \Sigma$, where $h^s(p)$ is the unique point $q \in \Sigma$ such that $p \in W_\delta^s(q)$. The above claim is equivalent to say that for some $n$, the map $h^s_{\vert C \cap D_n}$ is not injective.

That $h^s_{\vert C \cap D_n}$ cannot be injective for every $n$ is a straightforward consequence of the absolute continuity of the strong stable foliation. Indeed, if $h^s_{\vert C\cap D_n}$ is injective, then $m_\Sigma (h^s (C\cap D_n)) \geq K m_n(C \cap D_n)$ for some constant $K>0$ which does not depend on $n$. But $m_{\Sigma}$ is a finite measure, while $\limsup_{n \to \infty} m_n(C\cap D_n) = \infty$, a contradiction. This proves the claim.

Let $p$ and $q$ be distinct points on $D_n$ that lie in the same stable leaf. Then there is a path $\gamma_1$ from $p$ to $q$ inside $D_n$ and a path $\gamma_2$ from $q$ to $p$ inside $W^s(q)$. Let $\gamma = \gamma_1 * \gamma_2$. Then $\gamma$ is a loop based on $p$. We claim that $\gamma$ is not homotopic to the constant path at $p$. To this end, choose $\tp_1$ in the fiber of $p$ and let $\tgamma$ be the lift of $\gamma$ such that $\tgamma(0) = \tp_1$. Let $\tp_2 = \tgamma(1)$. Since $\gamma$ is a loop, $\tp_2$ is also in the fiber of $p$. To say that $\gamma$ is not homotopic to a constant path is equivalent to say that $\tp_1 \neq \tp_2$.

To see why $\tp_1 \neq \tp_2$, let $\tD_n$ a lift of $D_n$ that contains $\tp$. Let $\tgamma_1$ be the lift of $\gamma_1$ starting at $\tp_1$. Then $\tgamma_1$ ends at a point $\tq$ which is in the fiber of $q$. Moreover, $\tgamma_1$ is contained in $\tD_n$. Let $\tgamma_2$ be a lift of $\gamma_2$ starting at $\tq$. Then $\tgamma_2$ terminates at $\tp_2$. Recall that $\tcF^s$ and $\tcF^{cu}$ have global product structure. Since $\tgamma_1$ lies inside a leaf of $\tcF^{cu}$ and $\tgamma_2$ lies inside a leaf of $\tcF^s$, the image of $\tgamma_1$ and the image of $\tgamma_2$ can have at most one point in common. Since $\tq$ is such a point, we conclude that $\tp_1$ and $\tp_2$ must be distinct.

\end{proof}

\section{Proof of theorem \ref{main}}
Let $f$ be as in Theorem \ref{main} and suppose, for the purpose of obtaining a contradiction,  that $f$ is not transitive. By Proposition \ref{complementary pair} there exist non-empty disjoint $f$-invariant regular open sets $U, V \subset \TTT$. According to Lemma \ref{saturated},  these are saturated by strong stable leaves. Moreover, by Lemma \ref{nontrivial loop}, there is a loop $\gamma$ in $U$ and a loop $\sigma$ in $V$, neither of which is homotopic to a constant path. Let $[\gamma]$ and $[\sigma]$ be the elements of $\pi_1(\TTT)$ represented by $\gamma$ and $\sigma$, respectively. Note that $[\gamma]$ and $[\sigma]$ cannot be eigenvectors of $A = f_\star$, because we are assuming that $A$ is hyperbolic, and a hyperbolic linear map on $\RRR$ that preserves $\ZZZ$ cannot have any eigenvectors in $\ZZZ$. Of course, by invariance of $U$, $f(\gamma)$ is also a loop in $U$ and $[f(\gamma)] = A [\gamma]$. Thus, up to replacing $\gamma$ with $f(\gamma)$ if necessary, we may assume that $[\gamma]$ and $[\sigma]$ are linearly independent when seen as elements of $\RRR$.
Let  $v \in \RRR$ be a stable unit eigenvector of $A$. 
\begin{lemma}
$[\gamma]$, $[\sigma]$ and $v$ are linearly independent.
\end{lemma}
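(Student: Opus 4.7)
The plan is to prove this by a purely algebraic argument, reducing everything to the irreducibility of the characteristic polynomial of $A$ over $\mathbb{Q}$. Suppose for contradiction that $v \in W := \mathrm{span}([\gamma], [\sigma])$. Since $[\gamma]$ and $[\sigma]$ are linearly independent integer vectors, $W$ is a $2$-dimensional \emph{rational} subspace of $\RRR$, i.e., the real span of a $\mathbb{Q}$-subspace of $\mathbb{Q}^3$.

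The arithmetic input is the following. Since $A \in \GL(3, \Z)$ has $\det A = \pm 1$, its characteristic polynomial $p(t)$ is a monic integer polynomial of degree $3$ with constant term $\pm 1$. By the rational root theorem any rational root of $p$ must be $\pm 1$, but hyperbolicity of $A$ precludes eigenvalues of absolute value $1$. Hence $p(t)$ has no rational root, and, being a cubic, is irreducible over $\mathbb{Q}$. This is a direct strengthening of the observation made just before the lemma, that $A$ has no integer eigenvectors.

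Now apply $A$ to $W$: the image $AW$ is again a $2$-dimensional rational subspace (as $A$ has integer entries), and it contains $Av = \lambda v$, so $v \in W \cap AW$. If $W = AW$, then $W$ is an $A$-invariant rational $2$-plane; extending a rational basis of $W$ to a rational basis of $\mathbb{Q}^3$ displays $A$ as a block upper-triangular rational matrix, and the characteristic polynomial of its $2 \times 2$ upper block is a degree-$2$ factor of $p(t)$ in $\mathbb{Q}[t]$, contradicting irreducibility. If instead $W \neq AW$, the intersection $W \cap AW$ is a $1$-dimensional rational subspace containing $v$, so $v$ must be a scalar multiple of some integer vector $u$; then $Au = \lambda u$ with $u \in \ZZZ$ forces $\lambda \in \mathbb{Q}$, again contradicting irreducibility. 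Either way we reach a contradiction, so $v \notin \mathrm{span}([\gamma], [\sigma])$ and the three vectors are linearly independent. I do not foresee a serious obstacle for this particular lemma: the only substantive step is recognizing that hyperbolicity of $A$ forces its characteristic polynomial to be irreducible over $\mathbb{Q}$, after which the statement reduces to the clean fact that the stable line of a hyperbolic element of $\GL(3, \Z)$ misses every rational $2$-plane. The real topological work is deferred to the next step, where Lemma \ref{intersection} will be applied with $X = v$ and $Y, Z$ taken to be $[\gamma], [\sigma]$ (or vice versa) to derive a contradiction with $\tilde U \cap \tilde V = \emptyset$.
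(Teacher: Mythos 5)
Your proof is correct, but it takes a genuinely different route from the paper's. The paper argues topologically in one line: $[\gamma]$ and $[\sigma]$ span a rational plane that projects to a proper (hence non-dense) subtorus of $\TTT$, while the stable line of $A$ is known to project densely, so $v$ cannot lie in that plane. You instead give a self-contained algebraic argument: hyperbolicity plus $\det A=\pm1$ and the rational root theorem force the cubic characteristic polynomial to be irreducible over $\mathbb{Q}$, and then the two cases $W=AW$ and $W\neq AW$ each manufacture a rational factor (an invariant rational $2$-plane giving a degree-$2$ factor, or a rational eigenline giving a rational eigenvalue), which is impossible. Both arguments are sound; yours has the advantage of not invoking the density of the projected stable line as a black box --- indeed, the standard proof of that density fact runs through essentially the same irreducibility argument you wrote out, so your version makes the arithmetic content explicit at the cost of being longer. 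One small remark: your closing sentence assigns $X=v$ and $Y,Z=[\gamma],[\sigma]$ in the application of Lemma \ref{intersection}, whereas the paper takes $X=[\gamma]$, $Y=[\sigma]$, $Z=v$ (the curve $\tGamma$ plays the role of $\xi$ and the stable-saturated surface through $\tSigma$ plays the role of $\psi$); this does not affect the lemma at hand but is worth noting for the next step.
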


\begin{proof}
Since $[\gamma], [\sigma]\in\ZZZ$ are linearly independent, they span a plane in $\RRR$ which projects to a torus on $\TTT$. In particular, it is not dense. On the other hand, we know that the line spanned by $v$ projects to a dense subset of $\TTT$. Therefore, it cannot be contained in the torus spanned by $[\gamma]$ and $[\sigma]$.
\end{proof}

Let  $\Gamma, \Sigma: \R \to \TTT$ be periodic extensions of $\gamma$ and $\sigma$; that is, $\Gamma(t+n) = \gamma(t)$ and $\Sigma(t+n) = \sigma(t)$ for every $t \in [0,1]$ and every $n \in \Z$. Let $\tGamma$ and $\tSigma$ be any lifts of $\Gamma$ and $\Sigma$ to $\RRR$.

We fix a leaf $\tW$ of the foliation $\tcF^{cu}$. By Theorem \ref{potries theorem}, given any $\tx \in \RRR$, $\tW^s(\tx)$ intersects $W$ in precisely one point, and this point varies continuously with $\tx$.  Denote by $P: \RRR \to \R$ the projection to the last coordinate of a point in $\RRR$ expressed in the basis $\{[\gamma], [\sigma], v \}$. That is, $P(\tx) = x_3$ where $\tx =  x_1 [\gamma]+x_2 [\sigma]+x_3 v$. We know from Section \ref{preliminaries} that $P\circ H$ maps $\tW^s(\tx)$ homeomorphically onto $\R$ for every $\tx \in \RRR$. In particular, the map
\begin{align}
\Psi: \RRR & \to \tW \times \R \\
        \tx & \mapsto (\tW^s(\tx) \cap \tW , P\circ H(\tx))
\end{align}
is a continuous bijection.

\begin{lemma}
The map $\Psi$ is a homeomorphism.
\end{lemma}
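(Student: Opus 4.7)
The map $\Psi$ is already a continuous bijection; only the continuity of $\Psi^{-1}$ remains in question. The cleanest plan is to invoke Brouwer's invariance of domain theorem. Both the domain $\RRR$ and the codomain $\tW \times \R$ are topological $3$-manifolds: the factor $\tW$ is a leaf of the foliation $\tcF^{cu}$, which is tangent to a continuous two-plane bundle and is therefore a topological $2$-manifold. Invariance of domain guarantees that any continuous injection between topological manifolds of equal dimension is an open map, so $\Psi$ is open and $\Psi^{-1}$ is continuous.

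For a more hands-on argument that avoids appealing to invariance of domain, one can reason as follows. Let $(\tw_n, s_n) \to (\tw, s)$ in $\tW \times \R$, and set $\tx_n := \Psi^{-1}(\tw_n, s_n)$ and $\tx := \Psi^{-1}(\tw, s)$. The key observation is that $H$ sends $\tW^s(\tw_n)$ homeomorphically onto the affine line $H(\tw_n) + \R v$, and on this line $P$ is simply the $v$-coordinate in the basis $\{[\gamma], [\sigma], v\}$. Solving explicitly yields
\begin{equation}
H(\tx_n) = H(\tw_n) + \bigl(s_n - P\circ H(\tw_n)\bigr) v.
\end{equation}
By continuity of $H$, the right-hand side converges to $H(\tw) + (s - P\circ H(\tw))v = H(\tx)$, so in particular $\{H(\tx_n)\}$ is bounded in $\RRR$. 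Since $H$ is at bounded $C^0$ distance from the identity, $\{\tx_n\}$ itself is bounded and therefore admits convergent subsequences. Any subsequential limit $\tx^*$ lies in $\tW^s(\tw)$ by continuity of $\tcF^s$, and satisfies $P \circ H(\tx^*) = s$ by continuity of $P \circ H$. Thus $\Psi(\tx^*) = (\tw, s) = \Psi(\tx)$, and injectivity of $\Psi$ forces $\tx^* = \tx$. Hence $\tx_n \to \tx$.

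The main (and essentially only) delicate point is to prevent $\{\tx_n\}$ from escaping to infinity. The explicit formula above, combined with $H$ being close to the identity, takes care of this cleanly, after which everything is a formal consequence of the structure already established. All the heavy lifting for this lemma was done earlier in setting up $\Psi$ and showing that it is a continuous bijection; what remains is a purely topological conclusion.
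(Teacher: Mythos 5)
Your proposal is correct in substance but takes a different route from the paper, which proves instead that $\Psi$ is \emph{proper}: for a compact $K \subset \tW\times\R$, the paper bounds $\|\tx - \ty\|$ for $\tx,\ty \in \Psi^{-1}(K)$ by a triangle inequality through $H$, using that $H$ is at bounded distance from the identity and that $H$ sends strong stable leaves of $\tf$ onto lines parallel to $v$, so that $\|H\tx - Hp\| = |PH\tx - PHp|$. Your second, ``hands-on'' argument is morally the same mechanism, and your explicit identity $H(\tx_n) = H(\tw_n) + (s_n - P\circ H(\tw_n))v$ is a clean way to package it; but note that the step ``any subsequential limit $\tx^*$ lies in $\tW^s(\tw)$ by continuity of $\tcF^s$'' is not automatic for a general foliation (the relation of lying on the same leaf need not be closed); here it is justified because $\tcF^s$ is quasi-isometric (Proposition 6.8 of Potrie, quoted in Section 3), so the leaf segments joining $\tx_{n_k}$ to $\tw_{n_k}$ have uniformly bounded length and converge to a segment of $\tW^s(\tw)$. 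Your first argument via invariance of domain is genuinely different and shorter, trading the metric estimates for a purely topological fact; it is valid, but it silently requires that $\tW$, with the topology in which $\Psi$ is continuous (the subspace topology from $\RRR$), is a topological $2$-manifold, i.e.\ that the leaf is embedded rather than merely injectively immersed. This does hold here --- the global product structure makes $\tx \mapsto \tW^s(\tx)\cap\tW$ a continuous retraction of $\RRR$ onto $\tW$, so the leaf is closed and properly embedded --- but it deserves a sentence. What invariance of domain buys is independence from the quantitative behaviour of $H$; what the paper's properness argument buys is that it works with no appeal to the manifold structure of the leaf and generalizes to situations where the factors are not manifolds.
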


\begin{proof}
We only have to show that $\Psi$ is proper, i.e., for any compact set $K\subset \tW \times \R$, $\Psi^{-1}(K)$ is bounded. Since $K$ is compact, there exists a compact set $B\subset \tW$ and compact interval $J\subset\R$ such that $K\subset B\times J$. $B$ is also compact as subset of $\RRR$. Define a map $\tau: B\to\R$ by projecting $\Psi(p)$ to the second coordinate for any $p\in B$. Obviously, $\tau$ is continuous. Hence $\tau(B)$ is bounded. Given $(p, s), (q,t)\in K$, let $ \tx=\Psi^{-1}(p,s)$, $\ty=\Psi^{-1}(q,t)$. Then
$$
\|\tx-\ty\|\le \|\tx-p\|+\|p-q\|+\|q-\ty\|.
$$
We will estimate the three terms in the above formula. Assume that for any $\tz\in\RRR, \|H\tz-\tz\|\le K_1$. Since $\tx$ and $p$ are in the same strong stable leaf of $\tf$, $H\tx$ and $Hp$ are in the same strong stable leaf of $A$, i.e., if $H\tx=x_1[\gamma]+x_2[\sigma]+s v$, then $Hp=x_1[\gamma]+x_2[\sigma]+s'v$. This implies $\|H\tx-Hp\|=|PH\tx-PHp|$ (noting that we assume that $v$ is a unit vector). Thus,
\begin{eqnarray*}
\|\tx-p\|&\le&\|\tx-H\tx\|+\|H\tx-Hp\|+\|Hp-p\|\\
&\le& 2K_1+|PH\tx-PHp|\\
&\le& 2K_1+|PH\tx|+|PHp|\\
&=& 2K_1+|PH\tx|+|\tau p|.
\end{eqnarray*}
Since $PH\tx\in J$ and $\tau(B)$ is bounded, $\|\tx-p\|$ is bounded for $(p,s)\in K$. Similarly, we have $\|q-\ty\|$ is bounded. Since $B$ is compact, $\|p-q\|$ is obviously bounded. This proves that $\|\tx-\ty\|$ is bounded and hence $\Psi^{-1}(K)$ is bounded.
\end{proof}

Let $T_t: \tW \times \R \to \tW \times \R$ be the map $(p,s) \mapsto (p,s+t)$. We define
\begin{align}
\psi: \RR & \to \RRR \\
(s,t) & \mapsto \Psi^{-1} T_t \Psi \tSigma(s).
\end{align}

\begin{lemma}
There exists $K>0$ such that $\| s [\sigma]+ t v - \psi(s,t) \| <K$ for every $(s,t) \in \RR$.
\end{lemma}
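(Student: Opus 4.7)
The plan is to unwind the definitions of $\psi$ and $\Psi$, express $\psi(s,t)$ as something close to $H(\tilde\Sigma(s))+tv$, and then push this estimate back to $s[\sigma]+tv$ using two bounded perturbations: the $C^0$ bound between $H$ and the identity, and the $1$-periodic nature of $\tilde\Sigma$ modulo $[\sigma]$.

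First I observe that, by definition of $\Psi$ and $T_t$, the point $\psi(s,t)=\Psi^{-1}T_t\Psi\tilde\Sigma(s)$ has the same first $\tilde W$-coordinate as $\tilde\Sigma(s)$, which means that $\psi(s,t)$ and $\tilde\Sigma(s)$ lie on the same leaf of $\tilde{\cF}^s$; moreover the second $\R$-coordinate of $\Psi(\psi(s,t))$ is $P\circ H(\tilde\Sigma(s))+t$, so $PH(\psi(s,t))=PH(\tilde\Sigma(s))+t$. By the discussion at the end of Section \ref{preliminaries}, $H$ sends leaves of $\tilde{\cF}^s$ homeomorphically onto stable manifolds of the linear map $A$; since the stable direction of $A$ is spanned by $v$, these manifolds are affine lines parallel to $v$. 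Therefore $H(\psi(s,t))-H(\tilde\Sigma(s))$ is a multiple of $v$, and applying $P$ (which reads off the $v$-coefficient in the basis $\{[\gamma],[\sigma],v\}$) that multiple must equal $t$. Hence
\begin{equation}
H(\psi(s,t))=H(\tilde\Sigma(s))+tv.
\end{equation}

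Next I use the bound $\|H(\tz)-\tz\|\le K_1$ (valid for all $\tz\in\RRR$, as used in the previous lemma) to get
\begin{equation}
\|\psi(s,t)-\tilde\Sigma(s)-tv\|\le \|\psi(s,t)-H(\psi(s,t))\|+\|H(\tilde\Sigma(s))-\tilde\Sigma(s)\|\le 2K_1.
\end{equation}
Finally I estimate $\|\tilde\Sigma(s)-s[\sigma]\|$. Because $\Sigma$ is the periodic extension of the loop $\sigma$, its lift $\tilde\Sigma$ satisfies $\tilde\Sigma(s+1)=\tilde\Sigma(s)+[\sigma]$, so writing $s=n+r$ with $n\in\Z$ and $r\in[0,1]$ gives $\tilde\Sigma(s)-s[\sigma]=\tilde\Sigma(r)-r[\sigma]$; since $\tilde\Sigma$ is continuous on the compact interval $[0,1]$, this quantity is uniformly bounded by some constant $M$. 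Combining the two displayed estimates with the triangle inequality yields
\begin{equation}
\|s[\sigma]+tv-\psi(s,t)\|\le \|\psi(s,t)-\tilde\Sigma(s)-tv\|+\|\tilde\Sigma(s)-s[\sigma]\|\le 2K_1+M,
\end{equation}
so the lemma holds with $K=2K_1+M$.

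There is no real obstacle; the only point requiring care is the identification $H(\psi(s,t))=H(\tilde\Sigma(s))+tv$, which depends on remembering that $P$ is defined relative to the basis $\{[\gamma],[\sigma],v\}$ so that $P(v)=1$, and that $H$ sends $\tilde{\cF}^s$-leaves onto $v$-lines—the two ingredients that make the second coordinate of $\Psi$ exactly measure displacement along $v$ inside a stable leaf.
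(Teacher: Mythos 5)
Your proof is correct and follows essentially the same route as the paper: both arguments hinge on showing $H(\psi(s,t))=H(\tilde\Sigma(s))+tv$ via the fact that $H$ carries the strong stable leaf through $\tilde\Sigma(s)$ onto a line parallel to $v$ and that the second $\Psi$-coordinate shifts by exactly $t$, then conclude by the triangle inequality using the $C^0$ bound on $H-\mathrm{id}$. The only difference is cosmetic: you derive the bound on $\|\tilde\Sigma(s)-s[\sigma]\|$ explicitly from periodicity, where the paper simply absorbs it into the constant $K_1$.
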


\begin{proof}
Since $H$ is of bounded $C^0$ distance from the identity, there exists $K_1>0$ such that for any $\tx\in\RRR$, $\|H(\tx)-\tx\|\le K_1$. And for any $s\in\R$, we may assume that $\|\tSigma(s)-s[\sigma]\|\le K_1$.
Let $\Psi\tSigma(s)=(p, \tau)$. Then $T_t\Psi\tSigma(s)=(p,\tau+t)$. Denote $\tx=\tSigma(s)=\psi(s,0)=\Psi^{-1}(p,\tau)$ and $\ty=\psi(s,t)=\Psi^{-1} T_t \Psi \tSigma(s)=\Psi^{-1}(p,\tau+t)$. Since $\tx$ and $\ty$ are in the same strong stable leaf of $\tf$, $H\tx$ and $H\ty$ are in the same strong stable leaf of $A$, i.e., if $H\tx=x_1[\gamma]+x_2[\sigma]+\tau v$, then $H\ty=x_1[\gamma]+x_2[\sigma]+(\tau+t)v$. This implies that $H\ty-H\tx=t v$, i.e., $H\psi(s,t)-H\psi(s,0)=t v$. Then
\begin{eqnarray*}
\|\psi(s,t)-s[\sigma]-tv\|&\le& \|\psi(s,t)-H\psi(s,t)\|+\\\|H\psi(s,t)-H\psi(s,0)-tv\| &+&\|H\psi(s,0)-\psi(s,0)\|+\|\psi(s,0)-s[\sigma]\|\\
&\le& K_1+0+K_1+K_1=3K_1.
\end{eqnarray*}
Taking $K=3K_1$ finishes the proof of the lemma.
\end{proof}
Applying Theorem \ref{intersection} with $X = [\gamma]$, $Y = [\sigma]$, $Z = v$ and $\xi = \tGamma$, we conclude that there exists $r,s,t \in \R$ such that $\tGamma(r) = \psi(s,t)$. Notice that $\psi(s,\cdot)$ maps $\R$ homeomorphically onto $\tW^s(\tSigma(s))$. That means that $\tGamma(r)$ and $\tSigma(s)$ belong to the same strong stable leaf of $\tf$. Therefore, $\Gamma(r)$ and $\Sigma(s)$ belong to the same strong stable leaf of $f$. This is a contradiction, since  $U$ and $V$ are disjoint, saturated by stable leaves,   $\Gamma$ lies in $U$, and  $\Sigma$ lies in $V$. We conclude that $f$ is transitive.

\bibliographystyle{plain}

\bibliography{transitive}

\end{document}